\documentclass[a4paper,11pt]{amsart}
\usepackage[T1]{fontenc}
\usepackage{amsmath}
\usepackage{amsthm}
\usepackage{amssymb}
\usepackage{mathrsfs}
\usepackage[margin=1.2in]{geometry}
\usepackage{graphicx}
\usepackage[pdftex,%
bookmarksopen=false,
pdfpagemode=none 
]{hyperref}

\def\O{\mathcal{O}}

\theoremstyle{plain}
\makeatletter
\newtheorem*{rep@theorem}{\rep@title}
\newcommand{\newreptheorem}[2]{%
\newenvironment{rep#1}[1]{%
 \def\rep@title{#2 \ref{##1}}%
 \begin{rep@theorem}}%
 {\end{rep@theorem}}}
\makeatother

\newtheorem{theorem}{Theorem}
\newreptheorem{theorem}{Theorem}
\newtheorem{lemma}[theorem]{Lemma}
\newtheorem{claim}{Claim}
\newtheorem{definition}{Definition}
\newtheorem{observation}{Main Observation}
\newreptheorem{lemma}{Lemma}

\pretolerance=10000


\title{Transfinite Ford-Fulkerson on a Finite Network}
\author{Spencer Backman}
\address[Spencer Backman]{Department of Computer Science, University of Rome ``La Sapienza''}
\email{backman@di.uniroma1.it}

\author{Tony Huynh}
\address[Tony Huynh]{Department of Computer Science, University of Rome ``La Sapienza''}
\email{tony.bourbaki@gmail.com}

\thanks{This research was supported by the European Research Council under the European Unions Seventh Framework
Programme (FP7/2007-2013)/ERC Grant Agreement no. 279558.}

\begin{document}

\begin{abstract}
It is well-known that the Ford-Fulkerson algorithm for finding a maximum flow in a network need  not terminate if we allow the arc capacities to take irrational values.  Every non-terminating example converges to a limit flow, but this limit flow need not be a maximum flow.  Hence, one may pass to the limit and begin the algorithm again. 
In this way, we may view the Ford-Fulkerson algorithm as a transfinite algorithm.  

We analyze the transfinite running-time of the Ford-Fulkerson algorithm using ordinal numbers, and prove that the worst case running-time is $\omega^{\Theta(|E|)}$.
For the lower bound, we show that we can model the Euclidean algorithm via Ford-Fulkerson on an auxiliary network.  By running this example on a pair of incommensurable numbers, we obtain a new robust non-terminating example.  We  then describe how to glue $k$ copies of our Euclidean example in parallel to obtain running-time $\omega^k$.  An upper bound of $\omega^{|E|}$ is established via induction on $|E|$.  
We conclude by illustrating a close connection to transfinite chip-firing as previously investigated by the first author \cite{backman2014infinite}.
\end{abstract}

\maketitle

\section{Introduction}
The Ford-Fulkerson algorithm \cite{fordfulkerson} is a classic algorithm for computing the maximum flow in a network.  At each step, the algorithm finds
an \emph{augmenting path} $P$ from the source vertex $s$ to the sink vertex $t$ and then pushes as much flow as possible along $P$.  This procedure is then iterated, until no such augmenting path exists.  
It is well-known that in certain networks with irrational capacities, the Ford-Fulkerson algorithm does not necessarily terminate if the augmenting paths are not chosen carefully.  The smallest non-terminating example is due to Zwick \cite{nonterminating}.

Dinits \cite{dinits} and Edmonds and Karp \cite{edmondskarp} independently showed that if one always chooses an augmenting path of minimum length, then the Ford-Fulkerson algorithm will necessarily terminate.  We warn the reader that whenever we refer to the Ford-Fulkerson algorithm, we 
mean the original version where augmenting paths can be chosen arbitrarily.  

It is fairly easy to show that if the Ford-Fulkerson algorithm does not terminate, then it will converge to a (not necessarily maximum) flow $f$.  Thus, after  
$\omega$ steps, we may begin the algorithm anew, starting with the limit flow $f$.  By iterating this procedure, we can view the Ford-Fulkerson algorithm as
a transfinite algorithm and ask what its worst case running-time is in terms of ordinal numbers.  Note that the notion of using ordinals as a complexity measure dates back at least to the work of Turing \cite{turing}.

The following theorem, which is the main result of this paper, determines this worst case ordinal running-time
up to a constant factor in the exponent. 

\begin{theorem} 
The worst case running-time of the Ford-Fulkerson algorithm is $\omega^{\Theta(|E(N)|)}$.
\end{theorem}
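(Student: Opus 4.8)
The plan is to prove two matching bounds: a family of networks witnessing running time $\omega^{\Omega(|E(N)|)}$, and an upper bound of $\omega^{|E(N)|}$ valid for every network and every choice of augmenting paths; since the two agree up to a constant factor in the exponent, together they give the asserted $\omega^{\Theta(|E(N)|)}$. For the lower bound I would first build a \emph{Euclidean gadget}: a network $N_1$ with a bounded number of arcs, two of whose capacities are set to incommensurable reals $a>b$, on which the augmenting paths can be scheduled so that each successive augmentation imitates one step of the subtractive Euclidean algorithm $(a,b)\mapsto(a-b,b)$, with a swap performed once the first coordinate drops below the second. Since $a/b$ is irrational this never terminates, and the residual capacities of the two distinguished arcs run through the successive remainders of the continued fraction algorithm, which tend to $0$; hence after exactly $\omega$ augmentations the flow converges to a limit flow, which one checks to be maximum. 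So $N_1$ already has running time $\omega$, and it is a new non-terminating instance.

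To reach $\omega^k$ I would take $k$ copies $N_1,\dots,N_k$ of the gadget, wire them in parallel between a common source and sink, and add a small amount of ``carry'' plumbing so that the composite behaves like a $k$-digit odometer in base $\omega$: a single Euclidean step inside $N_i$ is performed only after the faster gadgets $N_{i+1},\dots,N_k$ have each completed a full $\omega$-length run, and the carry then reroutes flow so as to restore those gadgets to their initial states. With this wiring there is an augmenting-path schedule of order type $\omega\cdot\omega\cdots\omega=\omega^k$, so the composite has running time at least $\omega^k$. Since it uses $\Theta(k)$ arcs, choosing $k=\Theta(|E(N)|)$ gives the lower bound.

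For the upper bound I would induct on $m:=|E(N)|$, showing that every execution has length at most $\omega^m$. A finite execution has length $<\omega\le\omega^m$. Otherwise the first $\omega$ augmentations produce a limit flow $f$, and the core of the argument is a structural lemma describing $f$: passing to the limit must ``freeze'' at least one arc, in the sense that its residual capacity has converged and, after redirecting the flow it carries, the arc can be deleted or contracted without enabling any augmenting path not already available in the residual network $N_f$. Continuations of the algorithm from such a limit flow are then charged against instances on fewer arcs; what remains is to control how many $\omega$-length phases can occur before an arc is frozen and to verify, by induction, that these phases compound to at most $\omega\cdot\omega^{m-1}=\omega^m$ rather than to something larger.

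I expect this limit-passage lemma to be the main obstacle. An ordinary augmentation only decreases a few residual capacities, by amounts that need not be bounded away from $0$, so no single step visibly simplifies the network; all of the progress is hidden in the limit, and one must extract from the limit flow both a safely removable arc and a bound on the number of possible restarts that is tight enough for the induction to deliver exactly the exponent $|E(N)|$. On the lower-bound side the analogous delicate point is verifying that the carry plumbing between the parallel gadgets cannot be exploited to short-circuit the intended schedule, so that a run of order type $\omega^k$ really does exist.
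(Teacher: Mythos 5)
Your lower-bound plan is essentially the paper's: a Euclidean gadget $N_{a,b}$ whose augmentations imitate the subtractive Euclidean algorithm on incommensurable $a>b$, then $k$ copies glued in parallel and run lexicographically, with a ``recharging'' step playing the role of your carry. One small correction there: the recharge does not restore a gadget to its \emph{initial} state (that would be too much to ask); it only restores the pair of distinguished residual capacities to the \emph{current} remainders $(a_n,b_n)$, which are still incommensurable, and that is all one needs to launch another $\omega$-length run. Also, since the running-time is a supremum over runs chosen by a saboteur, you only need to exhibit one schedule of order type $\omega^k$ and check that each intended augmenting path has positive residual capacity (in particular enough flow on the arcs used backwards); there is no ``short-circuiting'' issue to rule out.

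The genuine gap is in your upper bound. Your ``limit-passage lemma'' --- that after $\omega$ steps some arc freezes and can be deleted or contracted without enabling new augmenting paths --- is exactly the step you cannot supply, and in the form you state it I do not believe it holds: the residual capacity of every arc converges along any $\omega$-sequence (the pushed amounts are summable), yet the limit flow on an arc can be strictly between $0$ and its capacity, and such an arc is not removable; moreover the algorithm may later route flow through any arc again, so no arc is permanently retired. The paper avoids deletion entirely. It calls an arc \emph{extreme} if it is saturated or carries zero flow, and proves by finite induction on $k$ that after $\omega^k$ steps either the algorithm has terminated or $f_{\omega^k}$ has at least $k+1$ extreme arcs. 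Crucially, the new extreme arc is \emph{not} obtained after finitely many $\omega^{k}$-length phases, as your ``control how many phases before an arc is frozen'' framing suggests; it is extracted in the limit of $\omega$ such phases by a pigeonhole argument (each arc flips between zero and saturated only finitely often, so some fixed set $A$ of extreme arcs and some fixed additional arc $a$, made extreme by the first augmentation of a phase, recur for infinitely many phase indices $j$, and both survive into $f_{\omega^{k+1}}$). The induction is therefore on the count of extreme arcs, not on a shrinking network. The endgame is also different from what you sketch: once all $|E(N)|$ arcs are extreme at every stage $f_{j\omega^{|E(N)|-1}}$, each subsequent augmentation pushes at least the minimum nonzero capacity, and doing this for every $j\in\mathbb{N}$ contradicts the boundedness of the flow value. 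Without some replacement for your freezing lemma --- and the extreme-arc count is the natural one --- your induction on $|E(N)|$ does not get off the ground.
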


This theorem is established via the following two lemmas.

\begin{lemma} \label{upper}
For every network $N$, every run of the Ford-Fulkerson algorithm terminates after at most $\omega^{|E(N)|}$ steps.
\end{lemma}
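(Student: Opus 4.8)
The plan is to argue by induction on $k:=|E(N)|$, establishing the bound $\omega^{k}$. The base case $k\le 1$ is immediate: with at most one edge there is at most one $s$–$t$ augmenting path that can ever be used, so any run has length at most $1\le\omega^{k}$. For the inductive step I fix a network $N$ with $|E(N)|=k\ge 2$ and a run $R=(f_\beta)_{\beta<\alpha}$ of the Ford--Fulkerson algorithm; if $\alpha$ is finite there is nothing to prove, so I assume $\alpha\ge\omega$.

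I would first record two preliminary facts about $R$. First, $R$ is a genuine transfinite run of countable length and the limit flows at which it restarts are well defined: the values $|f_\beta|$ strictly increase (each augmentation pushes a positive amount $\delta_\beta$) and are bounded above by the capacity of any fixed $s$–$t$ cut, so $\sum_\beta\delta_\beta<\infty$; since all $\delta_\beta$ are positive this forces $\alpha<\omega_1$, and along any sub-block of order type $\omega$ the tail sums of the $\delta_\beta$ tend to $0$, so for every edge $e$ the sequence $(f_\beta(e))_\beta$ is Cauchy at each limit ordinal and converges. Second, by pigeonhole, within any sub-block of $R$ of order type $\omega$ some edge is the bottleneck edge at infinitely many steps; passing to the limit of such a block, this edge has flow equal to $0$ or to its capacity, i.e.\ it becomes ``pinned''.

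The heart of the argument is then a decomposition of $R$ into at most $\omega$ consecutive sub-runs $R=R_{0}\frown R_{1}\frown\cdots$, each of which can be realised as a legal Ford--Fulkerson run on an auxiliary network with at most $k-1$ edges. Granting this, the inductive hypothesis gives $|R_i|\le\omega^{k-1}$ for every $i$, and since there are at most $\omega$ blocks, $\alpha=\sum_i|R_i|\le\omega^{k-1}\cdot\omega=\omega^{k}$; the ordinal bookkeeping is clean because $\omega^{k-1}$ is additively indecomposable, so every proper initial segment of $R$ meets only finitely many blocks and already has length $<\omega^{k-1}$. To build the blocks I would place each boundary at a limit ordinal $\lambda$ where the pinning observation applies, so that some edge $e$ is fixed to an endpoint of its capacity interval by $f_\lambda$; on the block starting at $\lambda$ one deletes $e$, absorbing the constant flow it carries into a modified net demand at its two endpoints, to obtain a $(k-1)$-edge network on which the restriction of $R$ is still a valid augmenting-path run.

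I expect the main obstacle to be precisely this reduction, in two respects. First, one must check that pinning a single edge really does let the remainder of the block be simulated on a smaller network — augmenting paths chosen after $\lambda$ may still wish to traverse the pinned edge — which I would handle by selecting $e$ so that its residual availability is exhausted in whichever direction it is subsequently needed, and rerouting through the modified demand. Second, and more delicate, one must bound the number of blocks by $\omega$; I would attempt this by tracking the flow value across block boundaries together with the fact, supplied by the inductive hypothesis, that each $R_i$ is itself a \emph{terminating} run on a smaller network, so that only $\omega$ such restarts can occur before the flow on $N$ is maximum. Combining the decomposition, the inductive hypothesis, and the ordinal arithmetic then completes the induction and hence the proof of Lemma~\ref{upper}.
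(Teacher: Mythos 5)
Your overall strategy (finite induction on $|E(N)|$, with limit stages producing a ``pinned'' arc) starts from the same basic observation the paper uses, but the way you try to exploit it has a genuine gap. The central problem is the reduction step: an arc $e$ that is pinned at a limit ordinal $\lambda$ (i.e.\ $f_{\lambda}(e)=0$ or $f_{\lambda}(e)=c(e)$) does not stay pinned afterwards. A saturated arc can immediately be used as a backward arc of the next augmenting path, and a zero-arc as a forward arc, so the flow on $e$ is not constant on the block beginning at $\lambda$, and you cannot delete $e$ and ``absorb the constant flow it carries'' to simulate the remainder of the block on a $(k-1)$-edge network. Your proposed fix --- choosing $e$ so that ``its residual availability is exhausted in whichever direction it is subsequently needed'' --- is not something you can guarantee exists: in the non-terminating examples of Section \ref{sec:lowerbound}, every arc that becomes extreme at a limit stage is traversed again, in both directions, arbitrarily late in the run. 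A second, independent gap is the claim that the number of blocks is at most $\omega$: nothing you say rules out a decomposition into, say, $\omega^{2}$ blocks of length $\omega^{k-1}$ each (which would only yield the bound $\omega^{k+1}$), and the assertion that ``only $\omega$ such restarts can occur before the flow on $N$ is maximum'' is exactly what needs proof.

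The paper avoids both difficulties by never shrinking the network. Lemma \ref{extremearcs} shows by finite induction on $k$ that after $\omega^{k}$ steps at least $k+1$ \emph{distinct} arcs are simultaneously extreme in $f_{\omega^{k}}$: one carries the set of extreme arcs across the $\omega$ sub-blocks of length $\omega^{k}$, uses the fact that an arc can switch between being a zero-arc and being saturated only finitely often (because the flow value is bounded) to stabilize, and applies pigeonhole to gain one new extreme arc at stage $\omega^{k+1}$. Once all $|E(N)|$ arcs are extreme at every stage $j\omega^{|E(N)|-1}$, each further augmentation pushes at least the minimum nonzero capacity, contradicting boundedness of the flow value. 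If you want to rescue your induction on $|E(N)|$, you would have to replace ``delete the pinned arc'' by this kind of bookkeeping of which arcs are extreme --- at which point you have essentially reconstructed the paper's argument.
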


The proof of Lemma \ref{upper} is by induction.  Although we are working with ordinal numbers where transfinite induction might seem like a natural tool, the argument proceeds by finite induction on the exponent $|E(N)|$.

\begin{lemma} \label{lower}
For every $\ell \in \mathbb{N}$, there exists a network $N(\ell)$ on $\ell$ arcs and a run of the Ford-Fulkerson algorithm on $N(\ell)$ with run-time at least $\omega^{\left \lfloor{\ell \over  25}\right \rfloor}$. 
\end{lemma}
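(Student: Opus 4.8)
The plan is two-stage. First I would build a \emph{Euclidean gadget}: a network on a constant number of arcs on which some run of Ford--Fulkerson simulates the subtractive Euclidean algorithm step by step, so that on an incommensurable input it runs for exactly $\omega$ steps and converges to a non-maximum limit flow (this is the robust non-terminating example promised in the abstract). Then I would glue $k=\lfloor \ell/25\rfloor$ copies of this gadget in parallel, with a small amount of coupling, so that each copy can be advanced by one step only after all earlier copies have run through a complete $\omega$-phase. An ordinal recursion then forces running time at least $\omega^k$.

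For the gadget, recall that the subtractive Euclidean algorithm keeps a pair $(a,b)$ of positive reals and repeatedly replaces $\max(a,b)$ by $|a-b|$; if $a/b$ is irrational it never halts, and for $a/b=\varphi$ (the golden ratio) it runs through the pairs $(\varphi^{n},\varphi^{n+1})$ forever. I would encode the current pair in the residual capacities of two distinguished arcs of a small network with source $s$ and sink $t$, choosing the remaining arcs and capacities so that: (1) from every reachable state there is an augmenting $s$--$t$ path, and any maximal choice of one transfers the correct residual through a handful of auxiliary arcs, i.e. performs the current Euclidean subtraction — so the run is \emph{forced} and no clever choice of augmenting paths can terminate it early; and (2) the flow value strictly increases at each step, so after $\omega$ steps the flow converges to a limit flow $f^{\ast}$. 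The gadget is designed so that $f^{\ast}$ is not maximum: at the limit a positive amount of residual capacity has accumulated on a ``bypass'' arc, opening exactly one new augmenting path. A careful count of the vertices and arcs needed to implement a subtraction plus the bypass gives at most $25$ arcs per copy, which is the origin of the constant in the statement.

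Next I would place $k$ copies $G_{1},\dots,G_{k}$ in parallel between a common source $s$ and sink $t$, plus a few coupling arcs arranged so that the single new augmenting path appearing at the limit of $G_{j}$ is routed through $G_{j+1}$, and traversing it simultaneously performs one Euclidean subtraction inside $G_{j+1}$ and re-seeds $G_{j}$ with a smaller pair of still-irrational ratio, reactivating it. Let $\tau_{j}$ be the order type of a run that drives $G_{1},\dots,G_{j}$ to a joint limit. Then $\tau_{1}\ge\omega$; and since $G_{j+1}$ needs $\omega$ subtractions to converge, each preceded by a phase of order type at least $\tau_{j}$, we get $\tau_{j+1}\ge\tau_{j}\cdot\omega$, hence $\tau_{k}\ge\omega^{k}$ by induction (using monotonicity of ordinal multiplication on the left). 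The final copy $G_{k}$ can be built so that its limit flow is already maximum, so the whole run terminates; by Lemma \ref{upper} its running time is a well-defined ordinal, and it is at least $\omega^{k}$. Taking $k=\lfloor \ell/25\rfloor$ and padding the arc count up to $\ell$ with dummy arcs gives $N(\ell)$.

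The main obstacle is the gluing. One must check that in the combined network the augmenting paths can indeed be chosen so that the copies do not interfere: while $G_{j}$ is performing its $\omega$ subtractions the coupling arcs to $G_{j+1}$ must carry no usable residual capacity, and the path through $G_{j+1}$ must become available exactly at the limit of $G_{j}$ and not before. Since we need only exhibit \emph{one} run meeting the bound we have full freedom in choosing augmenting paths, but we still must verify that the intended path exists at every step — a flow-conservation and feasibility check at each state — and that the infinite sums of flow pushed along each arc converge to the claimed residuals. Propagating these limit computations correctly through all $k$ levels while keeping each level within the $25$-arc budget is the technical heart of the proof.
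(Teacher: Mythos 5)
Your plan follows the paper's proof essentially verbatim: a small network whose augmenting paths simulate the subtractive Euclidean algorithm on an incommensurable pair (giving run-time $\omega$), followed by $k=\lfloor \ell/25\rfloor$ parallel copies coupled so that each full $\omega$-run of the inner copies buys finitely many Euclidean steps in the next copy together with a re-seeding of the inner copies by the smaller, still incommensurable pair $(a_n,b_n)$, yielding $\tau_{j+1}\ge\tau_j\cdot\omega$ and hence $\omega^k$, with the same bounded-total-flow check to fix the auxiliary capacities. The differences are cosmetic --- the paper advances the outer copy by two Euclidean steps and recharges the inner copy via explicit extra augmenting paths rather than a single limit-activated path, and it never needs your ``forcedness'' requirement in (1), since the run-time is a maximum over runs and one exhibited run suffices.
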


To prove Lemma \ref{lower}, we first construct a new non-terminating example of Ford-Fulkerson.  The main idea is to demonstrate that the Euclidean algorithm can be modelled by applying Ford-Fulkerson to a particular network.  Thus, a run of the Euclidean algorithm on two incommensurable numbers gives a non-terminating example of Ford-Fulkerson.  To obtain the general result, we demonstrate how to glue several copies of the Euclidean example in parallel and run them lexicographically. 

In \cite{backman2014infinite}, the first author investigates a certain transfinite chip-firing process on metric graphs.  In the last section we describe a close connection between the results presented in this article and those appearing in \cite{backman2014infinite}.

\section{Ordinal Running-time}
We now describe how we intend to measure running-time via ordinal numbers.  For an introduction to ordinals and network flow theory, we refer the reader to \cite{settheory} and \cite{combopt}, respectively. Roughly speaking, we use an extended notion of a Turing machine that can complete an infinite number of steps of computation, and continue computing afterwards.  This matches the notion of \emph{infinite time Turing machines} by Hamkins and Lewis \cite{infiniteturing}. However, as a tradeoff, we allow a saboteur to choose the augmenting paths at every step of the algorithm.  The ordinal running-time is then the worst running-time over all possible sets of choices of the saboteur.  We give the precise details below.   

All networks considered will always be finite with finite arc capacities.  
Each step of the Ford-Fulkerson algorithm will be indexed by an ordinal $\alpha$ and the corresponding flow after step $\alpha$ will be denoted $f_\alpha$ (we allow $f_0$ be any valid flow).  A \emph{run} of the algorithm is obtained by choosing 
an augmenting path $P_{\beta+1}$ for $f_\beta$ (if it exists) for each successor ordinal $\beta+1$, and setting $f_{\beta+1}$ to be the flow obtained from $f_{\beta}$ by pushing
as much flow as possible along $P_{\beta+1}$.  If no augmenting path exists at step $\beta+1$, we define $f_{\beta+1}$ to be $f_{\beta}$.  If $\alpha$ is a limit ordinal, we define $f_{\alpha}$ to be 
a certain limit flow.  Some care must be taken to ensure that this limit flow is well-defined and this is the content of Lemma \ref{well-defined}.  

The \emph{run-time} of a particular run is the least ordinal $\alpha$ such that $f_{\alpha}=f_{\alpha+1}$.  For a fixed network $N$, the (worst case) \emph{running-time} of the Ford-Fulkerson algorithm 
on $N$ is the maximum of the run-times over all runs of the algorithm on $N$.  
The main result of this section is that transfinite Ford-Fulkerson is a well-defined procedure.  

\begin{lemma} \label{well-defined}
For every ordinal $\alpha$, every run of the Ford-Fulkerson algorithm assigns a well-defined flow $f_\alpha$ after step $\alpha$.
\end{lemma}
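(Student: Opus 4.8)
The plan is to proceed by transfinite induction on $\alpha$. The only interesting case is when $\alpha$ is a limit ordinal, so the real content is showing that the sequence $(f_\beta)_{\beta < \alpha}$ has a well-defined limit. The key structural fact I want to isolate first is \emph{monotonicity}: along any run, the value $|f_\beta|$ of the flow is non-decreasing in $\beta$ (each augmentation pushes a strictly positive amount of flow across the source cut, or leaves the flow unchanged when no augmenting path exists). This alone is not quite enough, since the individual arc values $f_\beta(e)$ need not be monotone — flow can be rerouted. So I would track a finer invariant.

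\medskip
\noindent\textbf{Step 1: Boundedness and the candidate limit.}
Fix a limit ordinal $\alpha$ and assume $f_\beta$ is well-defined for all $\beta < \alpha$. Since capacities are finite, $f_\beta(e) \in [0, c(e)]$ for every arc $e$ and every $\beta < \alpha$. I claim that for each arc $e$, the net amount of flow that has been pushed across $e$ stabilizes. The cleanest way: the value sequence $|f_\beta|$ is non-decreasing and bounded above by the capacity of any $s$--$t$ cut, hence converges to some real $v^* = \sup_{\beta < \alpha} |f_\beta|$. Moreover, $\sum_{e} (\text{total variation of } f_\cdot(e) \text{ up to } \beta)$ is controlled by the total flow pushed, which is $|f_\beta| - |f_0| \le v^* - |f_0|$ — each unit of augmenting flow along a path $P$ changes $\sum_e |\Delta f(e)|$ by exactly $|P|$ times the augmentation amount, bounded by $|E(N)|$ times it. Hence the total variation of each coordinate $f_\cdot(e)$ over all $\beta < \alpha$ is finite, so $f_\beta(e)$ is Cauchy along the net and converges to a limit $f_\alpha(e) := \lim_{\beta \to \alpha} f_\beta(e)$.

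\medskip
\noindent\textbf{Step 2: The limit is a valid flow.}
Each constraint defining a flow — capacity constraints $0 \le f(e) \le c(e)$ and conservation at each non-terminal vertex — is a closed condition (an equality or a weak inequality among finitely many coordinates), and each $f_\beta$ satisfies it, so the limit $f_\alpha$ satisfies it as well. Thus $f_\alpha$ is a valid flow, with $|f_\alpha| = v^*$.

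\medskip
\noindent\textbf{Step 3: Well-definedness as independence of the indexing.}
The subtle point — and what I expect to be the main obstacle — is making precise what ``well-defined'' means here and checking it. The flow $f_\alpha$ is defined as a limit of a transfinite \emph{sequence}, indexed by the ordinals below $\alpha$ in their usual order; one must check that this limit genuinely exists (not merely a $\liminf$/$\limsup$ pair) and is insensitive to the fact that a limit ordinal can be approached along many different cofinal subsequences. Step 1 handles existence; for the cofinality issue, since the full net $(f_\beta(e))_{\beta<\alpha}$ converges, every cofinal subnet converges to the same value, so the limit is genuinely well-defined regardless of how the run ``arrives'' at $\alpha$. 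I would also remark that at successor stages there is nothing to check: $f_{\beta+1}$ is obtained from $f_\beta$ by an explicit, deterministic operation once the augmenting path $P_{\beta+1}$ is fixed by the saboteur. Assembling Steps 1--3 and invoking the inductive hypothesis completes the transfinite induction, so $f_\alpha$ is well-defined for every ordinal $\alpha$.
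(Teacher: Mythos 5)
Your proof is correct and follows essentially the same route as the paper: transfinite induction with the successor case trivial, and the limit case handled by observing that the total flow pushed is bounded by a cut capacity, so the per-arc increments are absolutely summable and the coordinate nets converge (the paper packages this as an explicit absolutely convergent series $f_0(e)+\sum_{\beta<\alpha} y(e,\beta)x_\beta$ rather than your total-variation/Cauchy-net phrasing, but the key estimate is identical). Your Step 2 (closedness of the capacity and conservation constraints) likewise matches the paper's limit-interchange argument for conservation of flow.
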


\begin{proof}
Let $N=(V,E)$ be a network with a source $s \in V$, a sink $t \in V$, and non-negative finite capacities $c(u,v)$ for each arc $(u,v) \in E$.  We proceed by transfinite induction.  We let $f_0$ be any valid initial flow.  Now, let $\alpha$ be an ordinal, and assume that $f_\beta$ is defined for all ordinals
$\beta < \alpha$. 

First suppose $\alpha$ is a successor ordinal, say $\alpha=\beta+1$.  If there is no augmenting path for $f_{\beta}$, then we set $f_\alpha:=f_\beta$. Otherwise, if the saboteur chooses the augmenting path $P_{\beta+1}$ for $f_\beta$, then we define $f_{\alpha}$ to be the flow obtained from $f_{\beta}$ by pushing as much as possible along $P_{\beta+1}$.  

If $\alpha$ is a limit ordinal, we proceed as follows.  For each arc $e$ and ordinal $\beta < \alpha$, we define a $\{-1,0, 1\}$-valued variable $y(e,\beta)$ as follows.   If $\beta:=\gamma+1$ and the saboteur chose $P_{\gamma+1}$ as the augmenting path for $f_\gamma$, then we set $y(e, \beta)$ to be 1 if $e$ is a forward arc of $P_{\beta}$, -1 if $e$ is a backward arc of $P_{\beta}$, and 0 otherwise.  We initialize $y(e,0):=0$.  We also let $x_0$ be the value of the initial flow $f_0$ and set $x_{\beta}$ to be the amount of flow pushed by $P_\beta$.  If $\beta$ is a limit ordinal or there is no augmenting path at step $\beta$, we set both $y(e, \beta)$ and $x_{\beta}$ to be 0.  

Consider $\sum_{\beta < \alpha} x_{\beta}$.  Observe that at most countably many terms are non-zero, since this sum is bounded (by the capacity of a minimum cut).  Moreover, this series converges absolutely, since each term is non-negative.  Therefore, this sum is independent of the order of summation and is hence well-defined.  We define a flow $f_{\alpha}$ by setting 
\[
f_{\alpha} (e)=f_0(e)+ \sum_{\beta < \alpha} y(e, \beta) x_{\beta}
\]
for each $e \in E$.  Observe that $\sum_{\beta < \alpha} y(e, \beta) x_{\beta}$ is an absolutely convergent series, since $\sum_{\beta < \alpha} x_{\beta}$ converges.  Therefore,  $\sum_{\beta < \alpha} y(e, \beta) x_{\beta}$ is also a well-defined sum.  Evidently, $0 \leq f_{\alpha}(u,v) \leq c(u,v)$ for all $(u,v) \in E$ since $0 \leq f_{\beta}(u,v) \leq c(u,v)$ for all $\beta < \alpha$.  
It remains to verify that $f_{\alpha}$ satisfies conservation of flow.  Let $u \in V \setminus \{s,t\}$.  Observe that 

\begin{align*}
\sum_{(v,u) \in E} f_{\alpha}(v,u) - \sum_{(u,v) \in E} f_{\alpha}(u,v) &= \sum_{(v,u) \in E} \lim_{\beta \to \alpha} f_{\beta}(v,u) - \sum_{(u,v) \in E}  \lim_{\beta \to \alpha} f_{\beta}(u,v) 
\\ &= \lim_{\beta \to \alpha} \Big( \sum_{(v,u) \in E} f_{\beta}(v,u) - \sum_{(u,v) \in E} f_{\beta}(u,v) \Big)
\\ &= 0
\end{align*}
where the last equality follows from conservation of flow for $f_{\beta}$.
\end{proof}

\section{The Upperbound}
Given a flow $f$, an arc $e$ is a \emph{zero-arc} if $f(e)=0$, is \emph{saturated} if $f(e)$ is equal to the capacity of $e$, and is \emph{extreme} if it is saturated or a zero-arc.   

\begin{lemma} \label{extremearcs}
For every $k \in \mathbb{N}$, every network $N$, and every run of the Ford-Fulkerson algorithm on $N$, either the algorithm has already terminated after $\omega^{k}$ steps or there are at least $k+1$ extreme arcs in $f_{\omega^{k}}$
\end{lemma}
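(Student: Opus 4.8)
The plan is to prove this by induction on $k$, with the key insight being that the limit flow $f_{\omega^k}$ must "freeze" at least one new arc at an extreme value during the $\omega^{k-1}$-length blocks of computation that make up the run up to step $\omega^k$. For the base case $k=0$, we need $f_{\omega^0} = f_1$ to have at least one extreme arc unless the algorithm has terminated: indeed, if there is an augmenting path $P_1$ for $f_0$, then pushing as much flow as possible along $P_1$ saturates some forward arc or empties some backward arc of $P_1$, creating at least one extreme arc; if there is no augmenting path, the algorithm has terminated at step $0 \le \omega^0$. For the inductive step, I would consider the run on the interval $[\omega^{k}, \omega^{k} \cdot n]$ for successive finite $n$, noting that $\omega^{k+1} = \lim_{n} \omega^k \cdot n$.

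The main structural idea is the following. Suppose the algorithm has not terminated after $\omega^k$ steps, so by induction there are at least $k+1$ extreme arcs in $f_{\omega^k}$. I claim that each of these arcs remains extreme in $f_{\omega^k \cdot n}$ for every $n$, and moreover that at least one additional arc becomes extreme in the limit $f_{\omega^{k+1}}$. The first part should follow from a monotonicity or "stabilization" argument: once we pass to a limit flow $f_{\omega^k}$, the set of extreme arcs is in some sense locked in — here I would want to argue that an arc that is saturated (resp.\ a zero-arc) in $f_{\omega^k}$ cannot have flow pushed through it in a way that de-saturates it without first... actually this needs care, since flow can be pushed back along a saturated arc. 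The correct claim to isolate is probably: the arcs that are extreme in $f_{\omega^k}$ and have flow value \emph{changing} during $[\omega^k, \omega^{k+1})$ contribute a convergent-to-zero amount, so in the limit $f_{\omega^{k+1}}$ they return to (or stay at) their extreme values, while simultaneously at least one brand-new arc is forced to an extreme value because otherwise the sub-run on $[\omega^k, \omega^{k+1})$ would itself have to terminate in finitely many steps (contradicting that we used a full $\omega^k$-block $\omega$-many times without termination).

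The cleanest way to organize this is likely to prove an auxiliary claim first: \emph{in any run, if the flow changes on the interval $[\alpha, \alpha + \omega^k)$ for a limit-or-zero ordinal $\alpha$, then either the algorithm terminates by step $\alpha + \omega^k$, or $f_{\alpha + \omega^k}$ has strictly more extreme arcs than $f_\alpha$}, restricted to the arcs whose capacity constraints are actually "active" in the residual network at time $\alpha$. Applying this claim with $\alpha = 0, \omega^k, \omega^k \cdot 2, \dots$ and passing to the limit at $\omega^{k+1}$ gives the bound. The hard part will be handling the interaction between arcs that oscillate (flow pushed forward and backward) versus arcs that monotonically approach saturation: I expect the main obstacle is showing that an arc extreme at time $\alpha$ genuinely "stays out of the game," i.e., formalizing that the residual capacities available on already-extreme arcs shrink to zero along the sub-run, so they cannot be the arc that absorbs the infinitely-many augmentations — forcing a fresh extreme arc to appear. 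This is where the absolute convergence of $\sum_\beta x_\beta$ from Lemma \ref{well-defined} does the real work, bounding the total perturbation on any fixed arc over a tail of the run.
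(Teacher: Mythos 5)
Your overall frame---finite induction on $k$, the base case via the bottleneck arc of a maximal augmentation, and the decomposition of $[0,\omega^{k+1})$ into $\omega$ consecutive blocks of length $\omega^{k}$---matches the paper. But the step you yourself flag as needing care is exactly where the proposal breaks, and neither of the fixes you float works. It is not true that an arc extreme in $f_{\omega^{k}}$ remains extreme in every $f_{\omega^{k}\cdot n}$, nor that such arcs ``return to (or stay at) their extreme values'' in $f_{\omega^{k+1}}$: a saturated arc can serve as a backward arc of later augmenting paths, and the total amount pushed back over the whole interval $[\omega^{k},\omega^{k+1})$ need not tend to zero (only the \emph{tails} of $\sum_\beta x_\beta$ do), so its limiting flow value can land strictly between $0$ and its capacity. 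For the same reason your auxiliary claim that $f_{\alpha+\omega^{k}}$ has \emph{strictly more} extreme arcs than $f_\alpha$ is false as stated; the set of extreme arcs is not monotone along the run, and no amount of restricting to ``active'' arcs repairs a monotonicity that simply is not there.

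The paper's proof circumvents persistence entirely. Let $A_j$ be the set of extreme arcs of $f_{j\omega^{k}}$; applying the inductive hypothesis to each block (with $f_{j\omega^{k}}$ as the initial flow, which the lemma permits) gives $|A_j|\geq k+1$, but the sets $A_j$ may vary with $j$. Two further ingredients finish the argument. First, the bottleneck arc of the augmentation at step $j\omega^{k}+1$ is extreme in $f_{j\omega^{k}+1}$; if it already lay in $A_j$ it would have to switch type (zero-arc to saturated or vice versa) in a single step, which costs at least the minimum positive capacity in pushed flow, and since $\sum_\beta x_\beta$ is bounded this can happen only finitely often---so for all large $j$ there is a newly extreme arc $a_j\notin A_j$. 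Second, one pigeonholes on the pair $(A_j,a_j)$ to fix a set $A$ with $|A|\geq k+1$ and an arc $a\notin A$ occurring together infinitely often, and then evaluates $f_{\omega^{k+1}}$ as the limit along the cofinal subsequences $j\omega^{k}$ and $j\omega^{k}+1$ to conclude that every arc of $A\cup\{a\}$ is extreme in $f_{\omega^{k+1}}$. These two devices---finiteness of type switches and pigeonhole over recurring extreme sets---are the content missing from your sketch; without them the inductive step does not go through.
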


\begin{proof}
We proceed by (finite) induction on $k$.  Since some arc must be extreme after pushing as much flow along an augmenting path, the lemma clearly holds for $k=0$ (note $\omega^{0}=1$).  We inductively assume that the claim holds for $k$.  Now, for each $j \in \mathbb{N}$, let $A_j$ be the set of extreme arcs for the flow $f_{j\omega^{k}}$.  Since $\omega^{k}$ steps have passed between $j\omega^{k}$ and $(j+1)\omega^{k}$, by induction we may assume that $|A_j| \geq k+1$ for all $j$.  Next note that each $e \in A$ can only switch between being a zero-arc and a saturated arc a finite number of times because the value of the flow is always bounded.  This implies that for all sufficiently large $j$, there is an arc $a_j \notin A_j$ such that $a_j$ is extreme in $f_{j \omega^{k} + 1}$. Let $A$ be such that $A_j=A$ infinitely often and $a$ be such that $a_j=a$ infinitely often.  Since $\lim_{\beta \to \omega^{k+1}} f_{\beta}=\lim_{j \to \omega} f_{j \omega^{k}}$, it follows that each edge $e \in A$ is extreme for $f_{\omega^{k+1}}$.  On the other hand, since $f_{\omega^{k+1}}=\lim_{j \to \omega} f_{j \omega^{k} + 1}$, we conclude that $a$ is also extreme for $f_{\omega^{k+1}}$. Thus, $f_{\omega^{k+1}}$ has at least $k+2$ extreme arcs, as required.  
\end{proof}

Using Lemma \ref{extremearcs}, we now prove our upperbound, restated for convenience.

\begin{replemma}{upper}
For every network $N$, every run of the Ford-Fulkerson algorithm terminates after at most $\omega^{|E(N)|}$ steps.
\end{replemma}

\begin{proof}
If the algorithm has not terminated after $\omega^{|E(N)|}$ steps, then Lemma \ref{extremearcs} implies that for all  $j \in \mathbb{N}$, every arc of $N$ is extreme in $f_{j \omega^{|E(N)|-1}}$.  Let $c$ be the smallest non-zero capacity over all arcs of $N$.  Since all arcs are extreme in $f_{j \omega^{|E(N)|-1}}$, the flow increases by at least $c$ at step $j \omega^{|E(N)|-1}+1$ (for all $j$).  This is a contradiction since the value of $f_{\omega^{|E(N)|}}$ is finite. 
\end{proof}

\section{The Lowerbound} \label{sec:lowerbound}

Let $a, b \in {\mathbb R}_+$.  We begin by constructing a network $N_{a,b}$ and a run of the Ford-Fulkerson algorithm on $N_{a,b}$ which simulates the Euclidean algorithm on $a$ and $b$.  

 \begin{figure}[h]

\centering
\includegraphics[height=6cm]{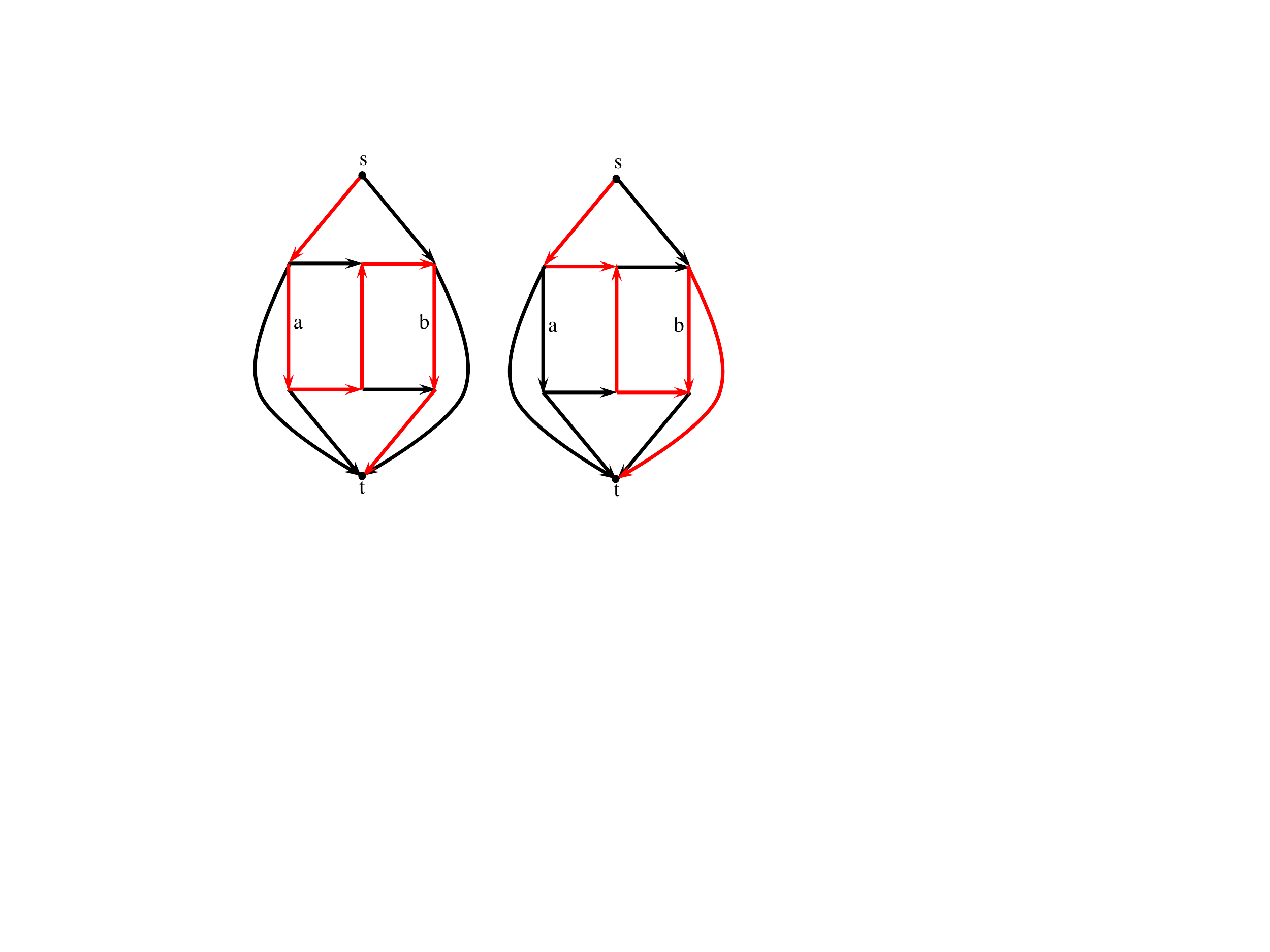}
\caption{ A network $N_{a,b}$ and two augmenting paths which allow for the subtraction of $b$ from $a$.}\label{euclidean}
\end{figure}

\begin{lemma}
Let $N_{a,b}$ be the network depicted in Figure \ref{euclidean}.  If $\frac{a}{b} \notin \mathbb{Q}$, then there is a run of the Ford-Fulkerson algorithm on $N_{a,b}$ with run-time at least $\omega$.
\end{lemma}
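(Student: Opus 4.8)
The plan is to exhibit one specific run of the Ford--Fulkerson algorithm on $N_{a,b}$ that performs infinitely many nontrivial augmentations. Since the run-time of a run is the least ordinal $\alpha$ with $f_\alpha=f_{\alpha+1}$ (and such an $\alpha$ exists by Lemma~\ref{upper}), a run in which $f_n\ne f_{n+1}$ for every finite $n$ automatically has run-time at least $\omega$. The guiding observation is that the \emph{subtractive} Euclidean algorithm on $(a,b)$ --- repeatedly replace the larger of the two current values by the difference of the two --- fails to terminate exactly when $a$ and $b$ are incommensurable, i.e.\ when $a/b\notin\mathbb{Q}$. So it suffices to realize a single subtraction step by a short, fixed block of augmenting paths and then iterate.

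Concretely, I would first fix $f_0$ so that in its residual network the two distinguished arcs marked in Figure~\ref{euclidean} have residual capacities $a$ and $b$, while every other arc has a prescribed ``template'' residual capacity depending only on the shape of the gadget. I would then establish, by induction on $n$, the invariant that the saboteur can choose augmenting paths so that the residual network of $f_{2n}$ is isomorphic to $N_{a_n,b_n}$, where $(a_n,b_n)$ is the pair obtained from $(a,b)$ after $n$ steps of the subtractive Euclidean algorithm (relabelling the two distinguished arcs so that the larger value sits on the arc the gadget expects). The inductive step is precisely the content of Figure~\ref{euclidean}: assuming the residual network is $N_{a,b}$ with, say, $a\ge b$, one pushes flow along the first displayed augmenting path --- whose unique bottleneck is $b$ --- and then along the second displayed path, after which the bookkeeping arcs are all restored to their template residual capacities and the two distinguished arcs carry $a-b$ and $b$. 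Hence the residual network becomes $N_{a-b,b}$ and the induction continues.

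To finish, note that because $a/b\notin\mathbb{Q}$ the pair $(a_n,b_n)$ always has both coordinates strictly positive: if some $a_n$ or $b_n$ were $0$, then unwinding the subtractions would express one of $a,b$ as an integer multiple of the other, forcing $a/b\in\mathbb{Q}$. Consequently $N_{a_n,b_n}$ always admits an $s$--$t$ path of positive residual capacity, so the run just described satisfies $f_k\ne f_{k+1}$ for every finite $k$. Therefore its run-time is at least $\omega$, as claimed. (One can, if desired, go on to identify $f_\omega$ as the limiting flow given by Lemma~\ref{well-defined}, but this is not needed for the stated bound.)

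The step I expect to be the main obstacle is the verification inside the inductive step: that after the two augmenting paths every non-distinguished arc returns \emph{exactly} to its template residual capacity, so that the residual network is genuinely a smaller member $N_{a-b,b}$ of the same family and not merely ``similar''; and, in tandem, checking that the quantity pushed along each path --- ``as much as possible'' --- is forced to be precisely the intended value, so that $b$ (and not some larger amount) is the true bottleneck of the first path. A secondary point requiring care is the symmetric case $a<b$, in which the roles of the two distinguished arcs swap: either the gadget of Figure~\ref{euclidean} is already invariant under this swap, or one uses the mirror-image pair of augmenting paths, and this should be spelled out so that the induction is uniform over both cases.
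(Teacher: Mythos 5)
Your overall strategy is the paper's: realize one subtraction step of the subtractive Euclidean algorithm by a fixed pair of augmenting paths, iterate, and conclude non-termination from $a/b\notin\mathbb{Q}$. But the invariant you propose to carry through the induction --- that after each pair of augmentations every non-distinguished arc returns \emph{exactly} to its template residual capacity, so that the residual network is literally isomorphic to $N_{a_n,b_n}$ --- cannot hold. Every augmentation strictly increases the value of the flow by the (positive) bottleneck amount, so the flow leaving $s$ and entering $t$, and hence the residual capacities of the arcs incident to $s$ and $t$ (and of the horizontal ``bookkeeping'' arcs, on which flow accumulates), change monotonically and never return to their initial values. You correctly flag this verification as the main obstacle, but the obstacle is not resolved by more careful checking: the invariant has to be weakened to ``the residual capacities of $e_a,e_b$ are $a_n,b_n$ and all other arcs retain enough residual capacity (resp.\ enough flow, for the arcs used backwards) never to be the bottleneck.'' Making that weaker invariant work requires two ingredients that are absent from your write-up and are the real content of the paper's proof: (i) a bound on the \emph{total} flow accumulated over all $\omega$ steps, so that a finite capacity can be assigned to the auxiliary arcs that is never reached --- the paper shows $a_{n+1}\le\tfrac12 a_n$ and $b_{n+1}\le\tfrac12 b_n$, giving total flow at most $4(a+b)$, and then sets all auxiliary capacities to $4(a+b)$; and (ii) a feasibility check for the mirror-image paths $P_1',P_2'$, which traverse the horizontal arcs \emph{backwards} and therefore need enough flow already present on those arcs to cancel --- the paper verifies $2m_1a_1\le 2b\le 2n_1b$ for this.

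A smaller point: your appeal to Lemma \ref{upper} to guarantee that the run-time exists is unnecessary for the bound you want; exhibiting infinitely many finite steps with $f_k\ne f_{k+1}$ already shows the least fixed-point ordinal, if it exists, is at least $\omega$. The substantive missing pieces are (i) and (ii) above, without which the construction cannot even be instantiated, since the auxiliary capacities must be specified as finite numbers.
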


\begin{proof}
The labelled arcs of Figure \ref{euclidean} (denoted $e_a$ and $e_b$) have capacities $a$ and $b$ with $0< b \leq a$.   The other arcs have large capacity (which we will specify later).  Given a flow $f$ and an arc $e$, the \textit{residual capacity} of $e$, denoted $c'(e)$, is defined to be $c(e)-f(e)$.  Let $P_1$ and $P_2$ be the augmenting paths shown in red. Now, starting from the zero-flow, if we push flow along $P_1$ and $P_2$, then  $c'(e_a)$ becomes $a-b$ and $c'(e_b)$ is still $b$.  We continue
this process until $a_1:=a-n_1b \leq b$.   At this point, note that the value of the currrent flow is $2n_1b$ and that the roles of $a_1$ and $b$ are reversed.  

Let $P_1'$ and $P_2'$ be the reflections of $P_1$ and $P_2$ through the vertical line from $s$ to $t$.  
By next pushing flow along the augmenting paths $P_1'$ and $P_2'$, we can convert the residual capacity of $e_b$ to $b-a_1$.  We continue this process until $b_1:=b-m_1a_1 \leq a_1$. 
Note that the horizontal arcs are backward arcs of $P_1'$ and $P_2'$.  However, since $2m_1a_1 \leq 2b \leq 2n_1b$, there is enough flow along the horizontal arcs to perform these $2m_1$ augmentations. The roles of $a_1$ and $b_1$ have been reversed again and we continue inductively.  Therefore, this example ``computes'' the greatest common divisor of $a$ and $b$. If $\frac{a}{b} \notin \mathbb{Q}$, it will have run-time at least $\omega$, as required.  However, it remains to check that the total flow is still bounded after $\omega$ steps (so that we may specify the capacities).    
\begin{claim} \label{euclideanbound}
The total flow after $\omega$ steps of the Euclidean run on $N_{a,b}$ is at most $4(a+b)$.
\end{claim}
\begin{proof}
Let $a=a_0 > b=b_0 > a_1 > b_1 > \dots$ be the intermediate outputs of the Euclidean algorithm. Observe that for all $n \geq 0$, $a_{n+1} \leq \frac{1}{2}a_n$ and $b_{n+1} \leq \frac{1}{2}b_n$. Therefore, the total flow after $\omega$ steps is
at most $2\sum_{n=0}^{\infty} (a_n+b_n) \leq 4(a+b)$, as required. 
\end{proof}
Thus, for all arcs $e \notin \{e_a, e_b \}$, we can take $c(e)=4(a+b)$.  
\end{proof}

This example is quite robust in that a small perturbation of the arc capacities will almost certainly produce another non-terminating example.  We will now ``glue'' $k$ copies of $N_{a,b}$ in parallel to obtain a run of the Ford-Fulkerson algorithm with run-time at least $\omega^k$.  This proves Lemma \ref{lower}.  A key property of our gluing construction is that we can  ``recharge'' one copy of the Euclidean example using only two steps of the Euclidean algorithm of another copy.  

\begin{replemma}{lower}
For every $\ell \in \mathbb{N}$, there exists a network $N(\ell)$ on $\ell$ arcs and a run of the Ford-Fulkerson algorithm on $N(\ell)$ with run-time at least $\omega^{\left \lfloor{\ell \over  25}\right \rfloor}$. 
\end{replemma}

\begin{proof}
For each $k \in \mathbb{N}$, we define a network $N_{a,b}^k$ and show that there is a run of the Ford-Fulkerson algorithm on $N_{a,b}^k$ with run-time at least $\omega^k$. For simplicity, we only illustrate the case $k=2$; the general case is similar. Figure \ref{Glued} depicts the corresponding network for $k=2$.  The labelled arcs have capacities $a,b,a$, and $b$ respectively with $b<a$ incommensurate.  We denote these arcs as $\ell_1, \dots, \ell_4$ from left to right.   The remaining arcs have large capacities, which we will specify later. We denote the three middle vertical arcs as $e_1, e_2$ and $e_3$ from left to right. 

Note that there are ``left'' and ``right'' copies of $N_{a,b}$ sitting in $N_{a,b}^2$, depicted in green and blue, respectively, in Figure \ref{Glued}.
 \begin{figure}[h]

\centering
\includegraphics[height=11cm]{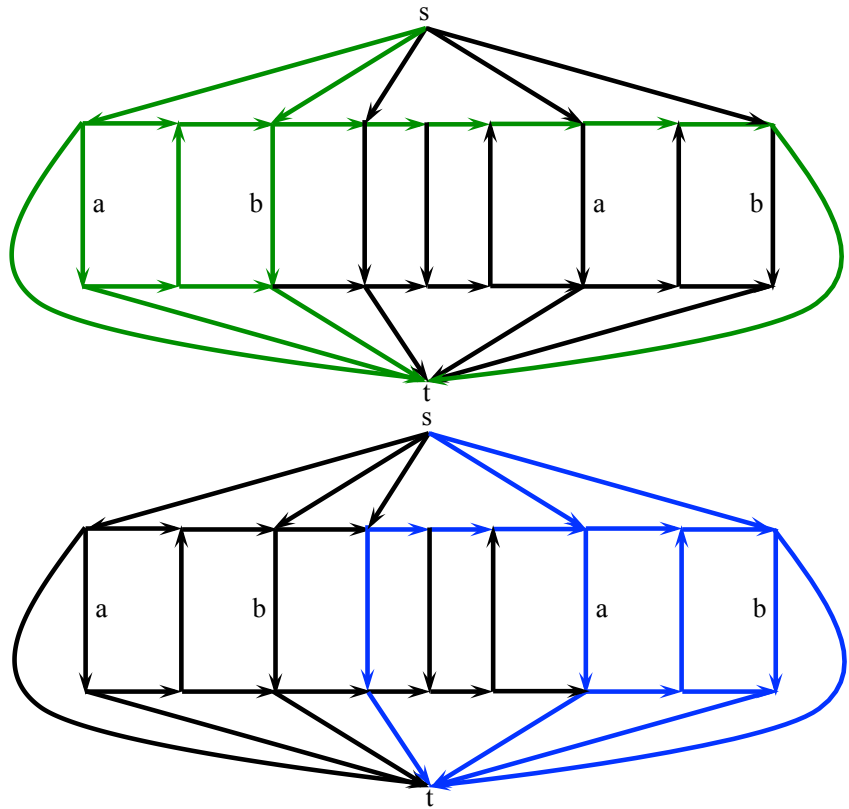}
\caption{A network $N_{a,b}^2$ (depicted twice) that admits a run of Ford-Fulkerson that takes $\omega^2$ iterations.  Left and right copies of $N_{a,b}$ in $N_{a,b}^2$ are shown in green and blue.}\label{Glued}
\end{figure}

Let $a=a_0 > b=b_0 > a_1 > b_1 > \dots$ be the intermediate outputs of the Euclidean algorithm for $(a,b)$.
We begin by running the Euclidean algorithm on the left copy of $N_{a,b}$.  This takes $\omega$ steps.   We then run two steps of the Euclidean algorithm on the right copy of $N_{a,b}$, obtaining residual capacities $a_1 > b_1$ for $\ell_3$ and $\ell_4$, respectively.  We next perform a recharging step (described below), and then re-run the (full) Euclidean algorithm on the left copy of $N_{a,b}$. We then run another two steps of the Euclidean algorithm on the right copy of $N_{a,b}$, obtaining residual capacities $a_2 > b_2$ and proceed iteratively.    

\textbf{The Recharging Step.}  At the beginning of the $n$th recharging step the residual capacities of $\ell_1, \ell_2, \ell_3$ and $\ell_4$ are $0, 0, a_n$ and $b_n$, respectively.  We first use the two augmenting paths in red  to recharge
the residual capacity of $\ell_1$ without drastically changing the rest of the network.   Observe that none of the middle arcs $e_1, e_2$ and $e_3$ are in the left or right copy of $N_{a,b}$. The  arcs $e_2$ and $e_3$ will always have zero-flow except during this recharging step.  By pushing flow along the two red augmenting paths,  $c'(e_1)$ becomes $a_n$ and $c'(e_2), c'(e_3)$ and $c'(e_4)$ are unchanged.   Note that both $e_2$ and $e_3$ have zero-flow after the recharging step, as claimed.  
Similarly, using two augmenting paths similar to the ones in red, we can recharge $c'(e_2)$ to $b_n$ without changing $c'(e_1), c'(e_3)$, and $c'(e_4)$.

One easily checks that there is always sufficient flow along backward arcs to run the Euclidean algorithm on the right copy of $N_{a,b}$ and to perform the recharging steps.  Furthermore, since $a$ and $b$ are incommensurate, $a_n$ and $b_n$ are also incommensurate for all $n$.  Therefore, this run of Ford-Fulkerson on $N_{a,b}^2$ requires at least $\omega^2$ steps.  By Claim \ref{euclideanbound}, the total flow after $\omega^2$ steps is at most
\[
4(a+b) + 4\sum_{n=0}^{\infty} (a_n+b_n) + 2\sum_{n=1}^{\infty} (a_n+b_n) \leq 14(a+b),
\]
where the first term corresponds to the Euclidean run on the right copy of $N_{a,b}$, the second term corresponds to
all the Euclidean runs on the sequence of left networks $N_{a_0,b_0}, N_{a_1,b_1}, \dots$, and the third term corresponds to the
total flow produced by the recharging steps.  

We can therefore take $c(e)=14(a+b)$ for all $e \notin \{\ell_1, \dots, \ell_4\}$. The proof is complete as it is straightforward to check that $N_{a,b}^k$ contains at most $25k$ edges for all $k$.  
\end{proof}

 \begin{figure}[h]

\centering
\includegraphics[height=11cm]{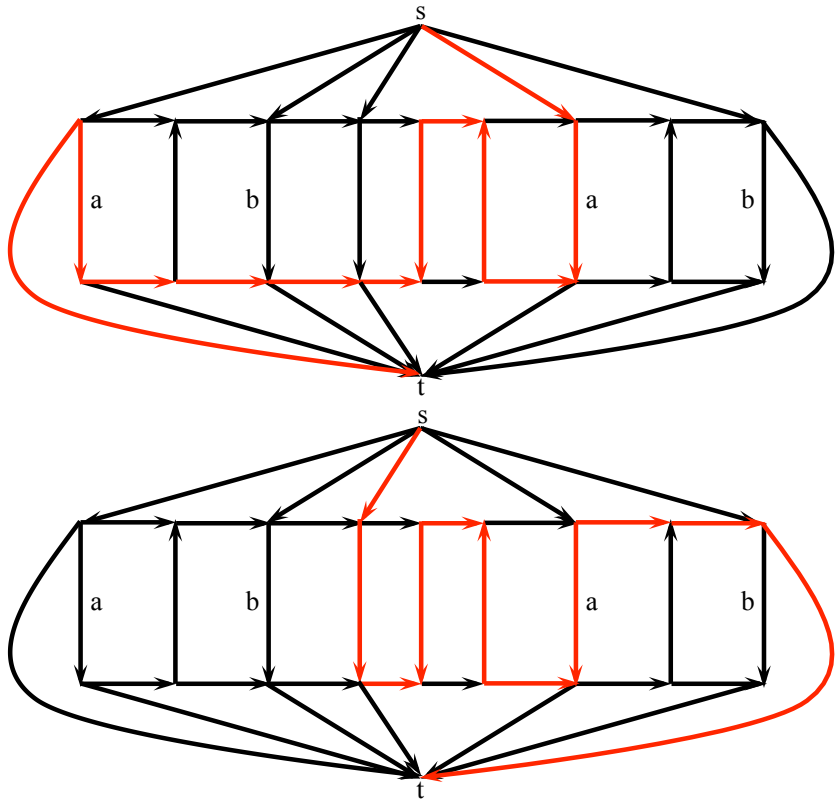}
\caption{A sequence of two augmenting paths that recharges the far left vertical edge is shown in red.}\label{parallel}
\end{figure}

Note that our example for the lowerbound starts with the zero-flow, while our proof for the upperbound is valid starting with any initial flow.

\section{Chip-firing and a Restricted Duality}

The first author \cite{backman2014infinite} proved the non-termination, and investigated the transfinite behavior, of a certain greedy chip-firing algorithm on metric graphs.  Many of the results obtained and the arguments employed in that paper are similar to those which appear in this article.  In this section we attempt to shed some light on the similarity between these two works by showing how in a restricted setting, a variant of the chip-firing algorithm previously investigated is planar dual to the Ford-Fulkerson algorithm.  We wish to emphasize that the duality presented in this section is not strong enough to imply that the results of either paper follow from the other.  We begin with a review of chip-firing.  We have attempted to make our introduction elementary and historically motivated, but nearly all proofs are omitted.  There are many references provided for the interested reader.

Chip-firing is a simple game played on the vertices of a graph, which has been independently discovered in several different communities \cite{bak1988self, bjorner1991chip, engel1975probabilistic, mosesian1972strongly}.  A \textit{configuration of chips} on the vertices of a graph is a function from the vertices to the integers.  We imagine each integer as describing some number of poker chips sitting at the corresponding vertex.  The vertices with a negative number of chips are said to be \emph{in debt}.  Given a chip configuration $D$, a vertex $v$ {\it fires} by sending a chip to each of its neighbors and losing its degree number of chips in the process so that the total number of chips is conserved.  This setup gives a combinatorial language for understanding the integer translates of the lattice generated by the columns of the Laplacian matrix.  There has been a recent explosion of interest in chip-firing as the natural language for developing a theory of divisors on graphs \cite{baker2007riemann, nagnibeda1997roland} and abstract tropical curves \cite{gathmann2008riemann, mikhalkin2006tropical, ranksofdivisors} analogous to the classic theory for algebraic curves.  An abstract tropical curve is essentially a metric graph; an edge weighted graph where each edge is isometric to an interval of length equal to the associated weight.  A \emph{chip configuration} on a metric graph is a function from the points of the metric graph to the integers with finite support.  In the discrete case, if a set of vertices $U$ fires, a chip is sent across each edge in $(U,U^c)$.  For a metric graph, we define the basic chip-firing moves by taking an edge cut of length $\epsilon$ and sending a chip across each segment in the cut.

  In keeping with the language of (tropical) algebraic geometry, we may refer to a chip configuration as a {\it divisor}, and we say that two divisors are {\it linearly equivalent} if we can get from one to the other by a sequence of chip-firing moves.  The main combinatorial tool for studying chip-firing on metric graphs is the {\it $q$-reduced divisor}, whose discrete analogue is also known as a $G$-parking function \cite{postnikov2004trees} and is dual to the recurrent configurations in the sandpile model \cite{baker2007riemann} \cite{biggs1999chip}.  We say that the number of chips in a divisor $D$ is the {\it degree of D} and write ${\rm deg}(D)$ for this quantity.
  
  \begin{definition}
  A divisor $D$ is \emph{$q$-reduced} if $D(p) \geq 0$ for all $p \neq q$ and for all $A \subset V(G) \setminus{q}$, firing the set $A$ causes some vertex to be sent into debt.  
    \end{definition}
    
    The fundamental theorem about $q$-reduced divisors is the following.
    
    \begin{theorem}\cite{baker2007riemann}\cite{gathmann2008riemann}\cite{ranksofdivisors}\cite{mikhalkin2006tropical}
  Each divisor $D$ is linearly equivalent to a unique $q$-reduced divisor.  Moreover, $D$ is linearly equivalent to a nonnegative divisor if and only if the equivalent $q$-reduced divisor is nonnegative.  
  \end{theorem}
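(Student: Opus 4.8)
The plan is to establish the two assertions in turn: first uniqueness, which is short and self-contained, and then existence of a $q$-reduced representative, from which the ``moreover'' clause follows with essentially no extra work. Throughout I would use the description of linear equivalence by $\mathbb Z$-piecewise-linear functions: $D \sim D'$ if and only if $D' - D = \operatorname{div}(f)$ for a continuous function $f$ on $\Gamma$ that is piecewise linear with integer slopes, where $\operatorname{div}(f)(p)$ is the sum of the outgoing slopes of $f$ at $p$ (on a finite graph this is exactly set-firing, with firing $A$ corresponding to $f = \mathbf{1}_A$). I will carry out the argument for finite graphs and then indicate the standard modifications needed for metric graphs.

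\textbf{Uniqueness.} Suppose $D_1 \sim D_2$ are both $q$-reduced and write $D_1 - D_2 = \operatorname{div}(f)$; if $f$ is constant then $D_1 = D_2$, so assume it is not. Since $\Gamma$ is connected and $f$ is nonconstant, $q$ fails to lie in at least one of the nonempty proper closed sets $A^+ = \{f = \max f\}$ and $A^- = \{f = \min f\}$. If $q \notin A^-$, then at each $p \in A^-$ every outgoing slope of $f$ is $\ge 0$ and every slope along a direction leaving $A^-$ is $\ge 1$, so $D_1(p) - D_2(p) = \operatorname{div}(f)(p) \ge \delta(p)$, where $\delta(p)$ counts the edge-directions at $p$ crossing out of $A^-$. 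Since $p \ne q$ we have $D_2(p) \ge 0$, hence $D_1(p) \ge \delta(p)$, which says precisely that firing the cut $A^-$ from $D_1$ sends no vertex into debt, contradicting that $D_1$ is $q$-reduced. If instead $q \in A^-$, then $q \notin A^+$, and the mirror-image computation on $A^+$ (outgoing slopes $\le 0$, slopes leaving $A^+$ at most $-1$) gives $D_2(p) \ge D_1(p) + \delta(p) \ge \delta(p)$ for all $p \in A^+$, so firing $A^+$ from $D_2$ is legal, contradicting that $D_2$ is $q$-reduced. Either way $f$ must be constant, so $D_1 = D_2$.

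\textbf{Existence and the moreover clause.} Starting from $D$, I would first (Step~1) reach a representative $D'$ that is effective away from $q$ by repeatedly adding $\operatorname{div}(g)$ for a function $g$ that vanishes near $q$ and decreases away from it, concretely minus the indicator of the complement of a ball about $q$, iterated over a finite set of radii; each such move pushes chips strictly toward $q$, and a geometric-series bound in the spirit of Claim~\ref{euclideanbound} shows the procedure halts. Then (Step~2) I would run Dhar's burning algorithm: repeatedly locate the unique maximal cut $A \subseteq \Gamma \setminus \{q\}$ whose firing creates no debt and fire it, halting when only $A = \emptyset$ works; the output $D^q$ is $q$-reduced by construction and linearly equivalent to $D$. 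Every firing in Step~2 keeps the divisor effective away from $q$ and never decreases the coefficient at $q$ (chips only move toward $q$). The moreover clause is then immediate: if the $q$-reduced $D^q \sim D$ is nonnegative then $D \sim D^q \ge 0$; conversely, if $D \sim E$ with $E \ge 0$, then $E$ is already effective everywhere, so Step~1 is vacuous for $E$ and Step~2 applied to $E$ produces $D^q$ by uniqueness, while keeping the coefficient at $q$ at least $E(q) \ge 0$, so $D^q(q) \ge 0$ and $D^q \ge 0$.

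\textbf{Metric graphs and the main obstacle.} For a general metric graph the three steps go through once ``firing a cut'' is read as transferring chips across a finite edge cut and Dhar's algorithm is run in its continuous form, the only bookkeeping being that $A^\pm$ and the burnt regions are finite unions of points and segments and that Step~1 iterates over finitely many ``break radii.'' I expect the main obstacle to be termination of the stabilization in Step~2 in the metric setting: unlike the finite case there is no obvious integer potential, so one must either show the burnt region grows in a sufficiently controlled way, or instead prove the statement on a cofinal sequence of rational subdivisions and pass to the limit, checking that the reduced divisors of the subdivisions converge to a reduced divisor of $\Gamma$. This is precisely the point where the cited references invest most of their technical effort.
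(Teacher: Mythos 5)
The paper does not prove this statement: it appears in the background section on chip-firing, where the authors explicitly say that ``nearly all proofs are omitted,'' and the theorem is simply quoted with citations to \cite{baker2007riemann}, \cite{gathmann2008riemann}, \cite{ranksofdivisors}, \cite{mikhalkin2006tropical}. So there is no in-paper argument to compare yours against; I can only assess your sketch on its own terms. Your uniqueness argument is the standard one and is essentially complete: the sign conventions are consistent, the observation that $q$ avoids at least one of $A^+$, $A^-$ is correct, and the inequality $D_1(p) \ge \delta(p)$ on $A^-$ (resp.\ $D_2(p)\ge\delta(p)$ on $A^+$) does exhibit a legal firing contradicting reducedness. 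The ``moreover'' clause also follows correctly from uniqueness once existence is in hand, via the monotonicity of the coefficient at $q$ under firings away from $q$.

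The genuine gap is exactly where you locate it, but it is larger than your phrasing suggests: in the metric setting, termination of Step~2 (iterated maximal legal firings) is not a routine ``bookkeeping'' issue to be patched by controlling the burnt region --- it is the entire content of the hard direction. The paper itself emphasizes that the naive repeated-firing process is not known a priori to terminate, that Luo had to design a genuinely different finite algorithm to compute the reduced divisor, and that the greedy version of this reduction provably fails to terminate in finite time (this non-termination is the subject of \cite{backman2014infinite} and the motivation for the whole article). So a proof that simply ``runs Dhar's algorithm until it stabilizes'' does not establish existence on a metric graph; you would need either Luo's modified algorithm, or the subdivision-and-limit argument you mention in passing, carried out in detail. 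Two smaller points: the appeal to Claim~\ref{euclideanbound} in Step~1 is a non sequitur (that claim bounds flow values in the Euclidean network and has nothing to do with clearing debt in a divisor), and in Step~1 a single firing of the complement of a ball about $q$ does not obviously clear debt at points deep inside that complement, so the iteration over radii needs to be set up more carefully (e.g., moving chips inward along paths from $q$, or firing nested complements in order of decreasing radius).
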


In the discrete setting, Dhar \cite{dhar1990self} provided an efficient method for testing whether a divisor is $q$-reduced. Imagine that each vertex has $D(v)$ firefighters present.  A fire is started at the root $q$ which spreads through the graph, but is prevented from burning through the vertices by the firefighters present.  When the fire approaches a vertex $v$ from more than $D(v)$ directions, the firefighters are overpowered and the fire passes through the vertex.  Dhar observed that $D$ is reduced if and only if the fire consumes the entire graph.  If $D$ is not reduced, then the set of vertices which are not burnt can be fired (not to be confused with burned) simultaneously without sending any vertex into debt, thus bringing the divisor closer to being reduced.  It is easy to check that if a divisor $D$ is not reduced and $U_1$ and $U_2$ are two sets of vertices which can be fired without sending any vertex into debt, then we can also fire $U_1 \cup U_2$ without sending any vertex into debt.  Thus, the set of ``fireable'' vertices forms a join semilattice, and Dhar's algorithm finds the unique maximum element in this semilattice. 

Given a divisor on a metric graph which is not $q$-reduced, one may repeatedly perform maximal firings towards $q$. If this process terminates, we arrive at the unique equivalent $q$-reduced divisor.  Unlike the discrete case, it is not clear {\it a priori} whether this process will terminate.  Luo \cite{luo2011rank} introduced a metric version of Dhar's algorithm and showed that this gives a finite method for computing the associated $q$-reduced divisor. The question of whether the greedy reduction method also terminates in finite time was left open. The first author \cite{backman2014infinite} demonstrated that the greedy reduction algorithm need not terminate, but as is the case with the Ford-Fulkerson algorithm, the greedy reduction algorithm always has a well-defined limit.  Thus, the reduction algorithm was interpreted as a transfinite algorithm and its running time was analyzed using ordinal numbers.  The main result of \cite{backman2014infinite} is the following.

\begin{theorem}\cite{backman2014infinite}
The worst case running time for the greedy reduction algorithm of a divisor on a metric graph is $\omega^{\Theta({\rm deg}(D))}$.
\end{theorem}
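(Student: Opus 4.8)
The plan is to reprove the theorem by mirroring, in the metric chip-firing setting, the two halves of the Ford-Fulkerson analysis of this paper, and to record separately the partial shortcut afforded by the duality of this section; throughout, fix the root $q$ and write $d(\cdot,\cdot)$ for the path metric on the metric graph $G$. For the upper bound I would show that every run of the greedy reduction algorithm on a divisor $D$ terminates after at most $\omega^{{\rm deg}(D)}$ maximal firings towards $q$, by finite induction on ${\rm deg}(D)$, following the proof of Lemma \ref{upper} essentially line by line. The role played there by an ``extreme arc'' is played here by a \emph{frozen chip}: a unit of $D$ that, past some stage of the run, sits at a vertex which never again belongs to a maximal fireable set (in particular every chip that has arrived at $q$ is frozen, since a divisor supported on $q$ and effective elsewhere is $q$-reduced). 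The analogue of Lemma \ref{extremearcs} to prove is that for every $k \in \mathbb{N}$, either the run has already reached the $q$-reduced divisor by stage $\omega^{k}$, or $D_{\omega^{k}}$ has at least $k+1$ frozen chips; the inductive step copies that of Lemma \ref{extremearcs}, with $k+1$ chips freezing within each window of $\omega^{k}$ steps by induction and persisting to the limit because chip positions converge, while a \emph{new} chip must freeze. This last point is the substitute for the ``smallest nonzero capacity'' step of Lemma \ref{upper}, and I would obtain it from a potential such as $\Phi(D)=\sum_{p} D(p)\, d(p,q)$, which is nonincreasing along firings towards $q$ and bounded below by $0$, so that a never-settling chip would force $\Phi$ to drop by a fixed positive amount infinitely often. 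Taking $k={\rm deg}(D)$ freezes every chip and forces $D$ to be $q$-reduced, which is enough for the $\Theta$ in the exponent.

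For the lower bound I would build a metric-graph gadget simulating the Euclidean algorithm, exactly as $N_{a,b}$ does for flows: a small metric graph carrying two distinguished edges of incommensurable lengths $a>b$, on which the maximal-firing moves repeatedly replace the longer residual length by its reduction modulo the shorter, so that a run on incommensurable $a,b$ fails to terminate yet reaches stage $\omega$ with ${\rm deg}(D)=O(1)$ (the degree bound being the chip-firing analogue of Claim \ref{euclideanbound}). I would then glue $k$ copies of this gadget along a path to $q$, inserting ``recharging'' edges playing the role of the middle arcs $e_1,e_2,e_3$ of Figure \ref{Glued}, so that two firing moves in one gadget reset its neighbour; as in the proof of Lemma \ref{lower}, this produces a run of length at least $\omega^{k}$ on a metric graph with ${\rm deg}(D)=\Theta(k)$, hence running time $\omega^{\Omega({\rm deg}(D))}$, and together with the upper bound this yields $\omega^{\Theta({\rm deg}(D))}$.

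Finally, for a planar $G$ with $q$ on the outer face the maximal firings of the reduction algorithm are planar dual to augmenting-path pushes on the dual network, with ${\rm deg}(D)$ matching $\Theta(|E(N)|)$, so in that restricted case the theorem drops out of Lemmas \ref{upper} and \ref{lower} directly; the reason \cite{backman2014infinite} still needs the self-contained argument sketched above is that not every metric graph is planar, and not every reduction run corresponds to a Ford-Fulkerson run respecting augmenting-path structure, so the duality only transports the bounds one way in a restricted setting. Of these ingredients, I expect the main obstacle to be making the notion of ``frozen'' stable across limit ordinals -- that is, controlling the continuous motion of chips along edges in the limit divisor and pinning down a potential that genuinely decreases by a bounded-below amount at each productive firing -- which is the chip-firing counterpart of the delicate limit argument in Lemma \ref{well-defined}.
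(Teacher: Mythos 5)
The first thing to say is that the paper contains no proof of this statement: it is quoted verbatim from \cite{backman2014infinite}, and the surrounding section explicitly states that ``nearly all proofs are omitted.'' So there is no internal argument to compare yours against; your proposal has to be judged as a reconstruction of the cited result. In outline it has the right shape -- an upper bound by finite induction on the exponent via a persistence lemma in the style of Lemma \ref{extremearcs}, and a lower bound by a Euclidean gadget glued $k$ times in parallel as in Lemma \ref{lower} -- and this matches the paper's own remark that the arguments of \cite{backman2014infinite} parallel those given here. Your closing observation that the duality of this section only transports the bounds in a restricted planar setting is also correct and is exactly the caveat the authors themselves make.

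There are, however, two genuine gaps in the upper-bound half. First, your notion of a \emph{frozen chip} is defined in terms of the entire future of the run (``never again belongs to a maximal fireable set''), whereas an \emph{extreme arc} is a property of the single flow $f_{\omega^k}$; the proof of Lemma \ref{extremearcs} depends on this, since it extracts a set $A$ with $A_j=A$ infinitely often and passes it to the limit, which requires the defining property to be checkable at each stage $j\omega^k$ rather than only in hindsight. You would need a structural property of the divisor $D_{\omega^k}$ itself (for instance, mass that has already reached $q$). Second, and more seriously, the substitute you offer for the ``smallest nonzero capacity'' step of Lemma \ref{upper} does not work: on a metric graph the firing lengths $\epsilon$ form a convergent series -- that is precisely why the greedy reduction algorithm fails to terminate -- so the claim that a never-settling chip forces the potential $\Phi(D)=\sum_p D(p)\,d(p,q)$ to drop by a \emph{fixed} positive amount infinitely often is false. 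In Ford--Fulkerson the quantum $c$ is supplied by the finitely many arc capacities once every arc is extreme; the metric analogue would have to be extracted from the finitely many edge lengths and branch points, and your sketch does not do this. You correctly flag this as the main obstacle, but as written it is an unfilled hole rather than a proof, so the upper bound (and hence the $\Theta$) is not yet established by your argument.
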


Given an orientation $\O$ of a discrete graph, we can associate a chip configuration $D_{\O}$ by taking the indegree minus one at each vertex.  Our connection between Ford-Fulkerson and the greedy reduction algorithm is derived from the following well-known relationship between $q$-reduced divisors and acyclic graph orientations.  

\begin{theorem}
A divisor $D$ with $D(q)=-1$ is a maximal $q$-reduced divisor if and only if $D = D_{\O}$ where $\O$ is an acyclic orientation with a unique source at $q$.
\end{theorem}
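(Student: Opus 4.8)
The plan is to route everything through Dhar's burning algorithm, in its discrete form. For a vertex $v$ and a set of vertices $S$, write $e(v,S)$ for the number of edges joining $v$ to $S$. If $D$ is $q$-reduced then the fire lit at $q$ consumes all of $G$, and moreover it can be propagated one vertex at a time: whenever the burnt set $B$ is a proper subset of $V(G)$, applying the definition of $q$-reducedness to the nonempty set $A := V(G)\setminus B \subseteq V(G)\setminus\{q\}$ yields a vertex $v\in A$ with $D(v) < e(v,B)$, so $v$ may be burnt next. Iterating produces an ordering $q = v_0, v_1, \dots, v_n$ of $V(G)$ with $e(v_i,\{v_0,\dots,v_{i-1}\}) \geq D(v_i)+1$ for every $i\geq 1$; conversely, any ordering with this property is such a burning order, and hence certifies that $D$ is $q$-reduced.

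The first substantive step is to identify the word ``maximal''. Summing the displayed inequalities over $i\geq 1$, and noting that each edge of $G$ is counted once (at its later endpoint), gives $\deg(D)+|V(G)| = \sum_{i\geq 1}(D(v_i)+1) \leq \sum_{i\geq 1} e(v_i,\{v_0,\dots,v_{i-1}\}) = |E(G)|$, so every $q$-reduced $D$ with $D(q)=-1$ has $\deg(D)\leq |E(G)|-|V(G)|$, with equality if and only if every inequality above is tight. On the other hand, if some inequality is strict, say $e(v_k,\{v_0,\dots,v_{k-1}\})\geq D(v_k)+2$ (necessarily $k\geq 1$, so $v_k\neq q$), then the divisor $D'$ obtained from $D$ by adding one chip at $v_k$ is again $q$-reduced, since the same ordering $v_0,\dots,v_n$ remains a valid burning order for $D'$ (changing the chip count only at $v_k$ affects only $v_k$'s own inequality, which still holds). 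Hence a $q$-reduced divisor $D$ with $D(q)=-1$ is maximal---whether one reads this as of maximum degree $|E(G)|-|V(G)|$ or as coordinatewise maximal among such divisors---precisely when every burning inequality is tight.

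For the forward implication, let $D$ be a maximal $q$-reduced divisor with $D(q)=-1$, fix a one-vertex-at-a-time burning order $q=v_0,\dots,v_n$, and use the previous paragraph to get $e(v_i,\{v_0,\dots,v_{i-1}\}) = D(v_i)+1$ for all $i\geq 1$. Orient $G$ by directing each edge $\{v_i,v_j\}$ with $i<j$ from $v_i$ to $v_j$; call this orientation $\O$. Every directed path in $\O$ has strictly increasing indices, so $\O$ is acyclic, and the indegree of $v_i$ in $\O$ is exactly $e(v_i,\{v_0,\dots,v_{i-1}\})$, which equals $D(v_i)+1$ for $i\geq 1$ and $0=D(q)+1$ for $i=0$. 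Thus $D_{\O}=D$, and since every $v\neq q$ has positive indegree, $q$ is the unique source of $\O$. For the converse, let $\O$ be an acyclic orientation with unique source $q$ and put $D:=D_{\O}$; then $D(q)=-1$ and $D(v)\geq 0$ for $v\neq q$, and for any nonempty $A\subseteq V(G)\setminus\{q\}$ the restriction of $\O$ to $G[A]$ is acyclic and so has a source $v\in A$, all of whose in-edges lie in $V(G)\setminus A$, whence $D(v)\leq e(v,V(G)\setminus A)-1 < e(v,V(G)\setminus A)$; thus firing $A$ sends $v$ into debt and $D$ is $q$-reduced. Finally $\deg(D_{\O})=|E(G)|-|V(G)|$, which by the second paragraph is the maximal degree available, so $D_{\O}$ is maximal.

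I expect the main obstacle to be the second paragraph: seeing that maximality is equivalent to tightness of all the burning inequalities, and in particular that relaxing a strict inequality by one chip preserves $q$-reducedness. Everything after that---the orientation read off from the burning order, and the ``source inside $A$'' argument for the converse---is routine.
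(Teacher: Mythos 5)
Your proof is correct. Note that the paper does not actually prove this theorem: it appears in the background section on chip-firing, where the authors state explicitly that ``nearly all proofs are omitted,'' and the statement is cited as a well-known fact from the divisor-theory literature. So there is no argument in the paper to compare against; what you give is the standard proof, and it is complete. Your route---characterizing $q$-reducedness by the existence of a one-vertex-at-a-time burning order $q=v_0,\dots,v_n$ with $e(v_i,\{v_0,\dots,v_{i-1}\})\geq D(v_i)+1$, summing these inequalities to get $\deg(D)\leq |E(G)|-|V(G)|$, identifying maximality (in either the degree or the coordinatewise sense) with tightness of all the inequalities, and then reading the acyclic orientation off the burning order and, conversely, producing a source of $\O$ restricted to $G[A]$ to witness reducedness of $D_{\O}$---is exactly how this equivalence is established in the references the paper cites. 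Two small points, both handled correctly but worth being aware of: the paper's definition of $q$-reduced is informal, and you are right to read ``some vertex is sent into debt'' as ``some vertex of $A$ goes negative'' (vertices outside $A$ only gain chips, and $q$ is already in debt); and since tightness of the burning inequalities is a priori relative to a chosen burning order, it is the degree identity $\deg(D)=|E(G)|-|V(G)|$ that makes the condition order-independent, which your summation argument supplies.
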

If we take a directed cut $(U,U^c)$ in $\O$ and reverse all of the edges, we obtain a new orientation $\O'$ such that the associated chip configuration $D_{\O'}$ is obtained from $D_{\O}$ by firing all of the vertices in $U$. Every acyclic orientation is equivalent via source reversals (or more generally directed cut reversals) to a unique orientation with a unique source at $q$.  This object can be obtained by arbitrarily flipping sources other than $q$ (or more generally directed cuts orientated towards $q$).  

It is easy to check that an acyclic orientation $\O$ has a unique source $q$ if and only if every other vertex is reachable from $q$ by a directed path.  This suggests the following efficient method to obtain the unique $q$-connected orientation equivalent to $\O$ by cut reversals. We first perform directed search from $q$.  If each vertex is reachable by some directed path, we are done. Otherwise, let $U$ be the set of vertices reachable from $q$.  The edge cut $(U,U^c)$ is oriented towards $q$, and we can flip this directed cut, bringing us closer to the desired orientation.  As is the case with chip configurations, given an orientation $\O$ and a root $q$, we can associate the collection of sets of vertices $U$, such that $(U,U^c)$ is a directed cut in $\O$ with $q \in U^c$, and this collection forms a join semilattice.  The cut which we obtain by search corresponds to the unique maximum element in this join semilattice.  In this sense, the search-based algorithm just described may naturally be viewed as a variant of Dhar's burning algorithm for graph orientations.  For the remainder of this section we will call this algorithm the {\it pseudo Dhar's algorithm}, which is not to be confused with the \emph{oriented Dhar's algorithm} appearing in \cite{backman2014riemann} (see Section 7 of that paper for a different connection between chip-firing and network flows).

So as to not mislead the reader, we also highlight some differences between Dhar's algorithm and the pseudo Dhar's algorithm.  The following distinctions hold for both the discrete and metric versions of these two algorithms.  The first difference is Dhar's algorithm proceeds by firing sets of vertices not including $q$, whereas the cut reversals described in the oriented version correspond to borrowings (the inverse of firings) by sets of vertices not including $q$. The second difference is that Dhar's algorithm for reducing divisors presumes that the divisor in question is nonnegative away from $q$, but for acyclic orientations, the associated divisor is not effective away from $q$.  In fact, the oriented variant of Dhar's algorithm applied to the acyclic orientation produces an acyclic orientation with an associated divisor which is nonnegative away from $q$ precisely when $q$ is the unique sink, and this occurs when the divisor is $q$-reduced.

A metric graph (orientation) can be considered as a limit of (orientations of) discrete graphs under repeated subdivisions of the edges, where the ratios of the induced path lengths converge to the desired ratios of lengths.  On the other hand, networks with real capacities may be viewed as a limit of networks having all edge capacities 1 where we continue adding parallel directed edges so that the ratios of the number of parallel edges converge to the desired ratios of the capacities.  For plane graphs, these limits are planar dual as replacing an edge with $k$ parallel edges is dual to replacing the dual edge with a path of length $k$.  See Figure \ref{networkmetric}.

   \begin{figure}[h]
  \centering
\includegraphics[height=5cm]{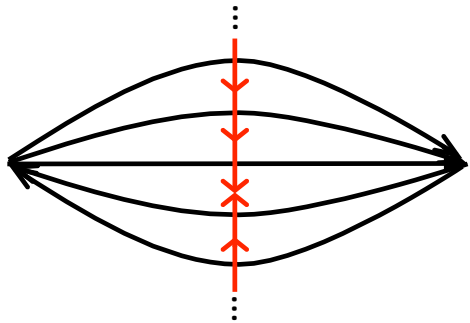}
\caption{The duality between planar parallel directed edges and planar induced paths of directed edges}\label{networkmetric}
\end{figure}

We note that while the termination of Luo's metric version of Dhar's algorithm is not obvious, the termination of the metric version of the pseudo Dhar's algorithm is.  Every time we are stuck with some cut oriented towards $q$ and we push this cut back as far as we can, we reach some new vertex.  Hence, this process will terminate in at most $|V|$ steps.  It is now natural to ask (analogous to the greedy reduction algorithm), if we reverse maximal cuts directed towards $q$ arbitrarily to obtain the unique equivalent $q$-connected acyclic orientation, whether this process will terminate in finite time.  We will call this method the {\it greedy cut reversals algorithm}.

A {\it $s$-$t$ planar network} is a planar network such that $s$ and $t$ belong to the same face.  We will call an acyclic $s$-$t$ planar network such that every edge belongs to a directed path from $s$ to $t$ a {\it simple $s$-$t$ planar network}.   Note that the networks appearing in our lower bound construction are of this form.  Given a planar network $N$ and an orientation of the plane, we define $\overline{N}$ to be the planar dual network.  

Berge \cite{berge} investigated $s$-$t$ planar networks and showed that a variant of Ford-Fulkerson exists which necessarily terminates in finite time and has better worst case running-time than the Edmonds-Karp variant of Ford-Fulkerson.  His method was elegant; when performing the Ford-Fulkerson algorithm, always choose the uppermost augmenting path.  We are now ready to describe the main observation of this section.

\begin{observation}
Given a simple $s$-$t$ planar network $N$, add an arc $e=(t,s)$ of very large capacity which passes above $N$.  The Ford-Fulkerson algorithm on $N$ is dual to the greedy cut reversal algorithm on the metric graph orientation $\overline {N \cup \{e\}}$.  Moreover, Berge's algorithm for $N$ is planar dual to the pseudo Dhar's algorithm for $\overline {N \cup \{v\}}$.  
\end{observation}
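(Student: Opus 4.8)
The plan is to make the planar duality concrete, check that a single step of each algorithm is the planar dual of a single step of the other, and then let the transfinite statement follow by induction on the ordinals (after verifying that the two limit constructions are also dual). Write $M := N \cup \{e\}$ and fix a plane embedding in which $e$ passes above $N$, so that $e$ cuts the old outer face of $N$ into the new outer face $r$ and a bounded face $q$ lying between $e$ and the upper boundary of $N$; we take $q$ to be the root of $G := \overline{M}$, and recall that the dual edge $a^{\ast}$ of an arc $a$ of $N$ is a metric edge of length $c(a)$, while $e^{\ast} = qr$ is very long. The first task is the dictionary. A flow $f$ on $N$ extends uniquely to a circulation on $M$ by routing $|f|$ units along $e$ from $t$ to $s$, and, by the classical planar duality between circulations and tensions, this circulation is encoded by a potential $\phi$ on the faces of $M$, that is, on the vertices of $G$, unique up to an additive constant, with $f(a)$ read off from the potential drop across $a^{\ast}$. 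The constraints $0 \le f(a) \le c(a)$ say precisely that this drop is a number in $[0, c(a)]$, which is exactly the data of an orientation of $a^{\ast}$: orient the sub-segment of length $f(a)$ one way and the complementary sub-segment of length $c(a) - f(a)$ the other way, with the convention fixed so that interior flip-points are sinks. Assembling these over all dual edges gives the orientation $\mathcal{O}_{f}$ of $G$ attached to $f$. The zero flow yields a particular acyclic orientation $\mathcal{O}_{0}$; every flow occurring during a Ford-Fulkerson run keeps $\mathcal{O}_{f}$ acyclic, since $c(e)$ is so large that the interior flip-point of $e^{\ast}$ survives and blocks every directed cycle through $e^{\ast}$; and a maximum flow corresponds to a $q$-connected acyclic orientation.

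With the dictionary in place, the core of the proof is a step-for-step matching. Given a flow $f$ and an augmenting $s$-$t$ path $P$ in the residual network $N_{f}$, adjoining the always-available residual arc $(t,s)$ of $e$ turns $P$ into a simple directed cycle $C$ in the residual network of $M$; viewed in the plane graph $M$, $C$ bounds a set $S$ of faces, and since $C$ runs along $e$, exactly one of $q, r$ lies in $S$. Pushing $\delta$ units of flow along $P$ is the same as changing $\phi$ by $\delta$ on every face of $S$, which moves the flip-points of exactly the dual edges crossing $C$; that is, it reverses, by the amount $\delta$, the cut of $G$ dual to $C$, and one checks that this cut is directed towards $q$ in $\mathcal{O}_{f}$. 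Hence pushing as much flow as possible along $P$ is dual to reversing that cut maximally, so one Ford-Fulkerson augmentation on $N$ is dual to one greedy cut reversal on $G$; moreover one side admits no move precisely when the other does not, since ``$f$ has no augmenting path'' is dual to ``$\mathcal{O}_{f}$ is $q$-connected''. Induction over the ordinals, together with the fact that the absolutely convergent series defining the limit flow $f_{\alpha}$ in Lemma \ref{well-defined} dualises to the corresponding limit of potentials (both controlled by the same finite bound on $|f|$, so that the limit orientation is well defined), then gives the first assertion.

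For the second assertion I would rerun the matching with Berge's rule, always taking the \emph{uppermost} augmenting path in $N_{f}$ in place of an arbitrary choice. Under the embedding, the cycle $C$ obtained from the uppermost augmenting path together with $e$ hugs the top of $M$, and the set $S$ of faces it encloses is exactly the set of vertices of $G$ reached by the directed search of the pseudo Dhar's algorithm started at $q$ in $\mathcal{O}_{f}$; in other words, ``no augmenting path can be routed higher'' is the planar dual of ``the search from $q$ cannot spread further''. Thus the uppermost augmentation is dual to the maximal cut reversal produced by that search, and the two algorithms agree under the duality. I expect the genuine work to lie in two places. The first is pinning down the embedding, the orientation, and the left/right conventions so that the correspondences hold on the nose, most delicately that ``the dual of an augmenting path together with $e$'' is exactly ``a cut directed towards $q$'' (and that the cuts the reversal algorithm may flip are all accounted for in this way), and that ``uppermost'' matches ``search-reachable''. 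The second is checking that $\mathcal{O}_{f}$ remains acyclic throughout a run, which, as noted, is precisely the point at which the large capacity of the auxiliary arc $e$ is needed.
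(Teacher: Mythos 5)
Your proposal is correct and follows essentially the same route as the paper's own (informal) justification: extend each augmenting path through the auxiliary arc $e$ to a directed cycle enclosing the face $q$, use planar duality to identify pushing flow along that cycle with reversing a directed cut towards $q$ in the metric dual orientation, match the termination conditions (no augmenting path versus $q$-connectedness), and identify Berge's uppermost path with the cut found by the pseudo Dhar's search. The main difference is one of precision rather than approach — your face-potential dictionary and flip-point encoding, the acyclicity check via the large capacity of $e$, and the treatment of limit ordinals make explicit several steps the paper leaves as a sketch.
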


We note that there is some ambiguity in this duality because the capacity of $e$ has not been specified. However, since $v$ was assumed to have very large capacity, the dual directed edge will never affect a run of the pseudo Dhar's algorithm.  Hence, the dual of $N$ is essentially well-defined.  By adding $\{e\}$ to $N$, we obtain an auxiliary directed graph which is strongly connected.  Now, augmenting flows in our original network can be extended via $\{e\}$ to reversals of weighted directed cycles containing the unique clockwise face $q$ containing $s$ and $t$.  Thus, the general Ford-Fulkerson algorithm may be interpreted as greedily performing directed weighted cycle reversals until all of the cycles enclosing $q$ are oriented counterclockwise.  The planar dual $\overline {N \cup \{e\}}$ is an acyclic metric graph orientation and the weighted directed cycle reversals in $N$ (which necessarily contain $e$) are planar dual to directed cut reversals in $\overline {N \cup \{e\}}$ toward $q$.  One may check that there are no clockwise directed cycles in our original digraph if and only if the planar dual of the auxiliary network is $q$-connected, where as an abuse of notation we have identified the face $q$ and the corresponding dual vertex.  Berge's uppermost augmenting path extends via $\{e\}$ to a directed cycle which is dual to the cut given by the pseudo Dhar's algorithm.  By taking the symmetric difference of this orientation with our original auxiliary directed graph and deleting $\{e\}$, we obtain a maximum flow in our original network.  

As an application of this duality, by taking the planar dual of the networks and augmenting paths described in Section \ref{sec:lowerbound}, we obtain acyclic orientations such that the greedy cut reversals algorithm takes at least $\omega^k$ steps to terminate.  See Figure \ref{dualeuc}.

  \begin{figure}[h]
  \centering
\includegraphics[height=8cm]{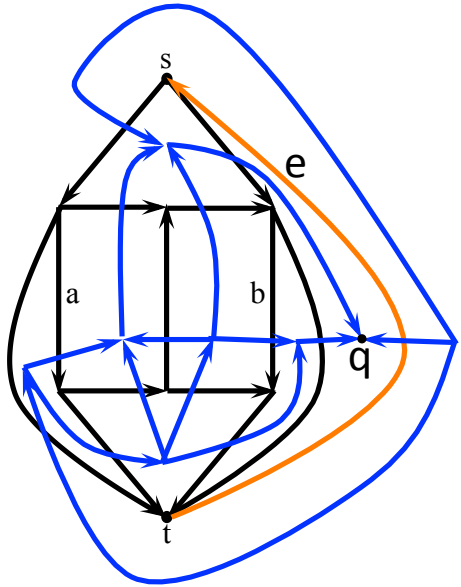}
\caption{The ayclic metric orientation dual to the Euclidean network.}\label{dualeuc}
\end{figure}

\

\textbf{Acknowledgements.} We would like to thank Matt Baker,  Diane Maclagan, and Vic Reiner for each noting the similarity between the non-termination of Ford-Fulkerson and the non-termination of metric chip-firing.

\bibliography{references}{}
\bibliographystyle{plain}

\end{document}